\newcommand{\ep}{\epsilon}
\newtheorem{thm}{Theorem}[section]
\newtheorem{lmm}[thm]{Lemma}
\newtheorem{prop}[thm]{Proposition}
\newtheorem{defn}[thm]{Definition}
\newtheorem{conj}[thm]{Conjecture}
\theoremstyle{definition}
\newcommand{\ee}{\mathbb{E}}
\newcommand{\mf}{\mathcal{F}}
\newcommand{\cp}{\mathcal{P}}
\newcommand{\pp}{\mathbb{P}}
\newcommand{\rr}{\mathbb{R}}
\newcommand{\var}{\mathrm{Var}}
\newcommand{\zz}{\mathbb{Z}}
\newcommand{\fpar}[2]{\frac{\partial #1}{\partial #2}}
\numberwithin{equation}{section}
\begin{document}
\title[Path localization in directed polymers]{Proof of the path localization conjecture for directed polymers}
\author{Sourav Chatterjee}
\address{\newline Department of Statistics \newline Stanford University\newline Sequoia Hall, 390 Serra Mall \newline Stanford, CA 94305\newline \newline \textup{\tt souravc@stanford.edu}}
\thanks{Research partially supported by NSF grant DMS-1608249}
\keywords{Directed polymer, random environment, localization, disordered system}
\subjclass[2010]{60K37, 82B44, 82D60, 60G17}

\begin{abstract}
It is a well-known open problem in the literature on random polymers to show that a directed polymer in random environment localizes around a favorite path at low temperature. A precise statement of this conjecture is formulated and proved in this article.
\end{abstract}

\maketitle


\section{Introduction and main result}
Ever since the seminal work of \citet{anderson58}, the phenomenon of localization in the presence of random impurities has been a recurrent theme in the study of disordered systems in statistical physics. Much of this was initially based on simulation studies and heuristic arguments. Rigorous proofs of localization phenomena started appearing much later, and it is still considered to be a difficult area with many open problems. This paper is about one such problem. Let us begin by defining what is meant by localization in the relevant context.
\subsection{Path localization}\label{localsec}
Fix a dimension $d\ge 1$. This number will remain fixed throughout this article. For each $n$, let $\cp_n$ be the set of all paths of length $n$ starting from the origin in the nearest-neighbor graph on $\zz^d$. A path $p\in \cp_n$ will be denoted as a sequence of $n$ vertices $(x_1,\ldots, x_n)$, omitting the starting point (the origin). For two paths $p=(x_1,\ldots,x_n)$ and $p'=(x_1',\ldots,x_n')$, we define 
\[
p\cap p' := \{k: x_k=x_k'\},
\]
and let $|p\cap p'|$ denote the size of $p\cap p'$. 

Now suppose that $P$ is a random path of length $n$ starting from the origin and $\mu$ is the law of $P$. In other words, $\mu$ is a probability measure on $\cp_n$.  A natural quantification of how much $\mu$ `localizes' around a given path $p$ is given by the quantity
\[
\ell(\mu, p) := \frac{\ee|P\cap p|}{n},
\]
that is, the expected fraction of times $P$ coincides with $p$. If we maximize this over all paths, we get a natural measure of the localization of $\mu$. 
\begin{defn}\label{ldef}
Let $P$ and $\mu$ be as above. We define the degree of localization of $\mu$ (or $P$) as 
\[
\ell(\mu) := \max_{p\in \cp_n} \ell(\mu, p)= \max_{p\in \cp_n} \frac{\ee|P\cap p|}{n}. 
\]
\end{defn}
In other words, $\ell(\mu)\ge \delta$ if and only if there exists a path $p$ such that $\ee|P\cap p|\ge \delta n$. It is not difficult to check that if $P$ is a simple symmetric random walk of length $n$, then
\[
\ell(\mu)\asymp
\begin{cases}
n^{-1/2} &\text{ if } d =1,\\
n^{-1} \log n &\text{ if } d=2,\\
n^{-1} &\text{ if } d\ge 3.
\end{cases}
\]
Consequently, the law of the simple symmetric random walk does not localize as $n\to \infty$ in any dimension. The following definition makes this statement precise.
\begin{defn}\label{localdef}
For each $n$, let $\mu_n$ be a probability measure on $\cp_n$. We will say that the sequence $(\mu_n)_{n\ge 0}$ localizes if 
\[
\liminf_{n\to\infty} \ell(\mu_n) >0.
\]
\end{defn}
In this article we will however be dealing with random probability measures on paths, not deterministic ones. It is not obvious what may be the right way to generalize Definition \ref{localdef} to random measures. We adopt the following definition in this paper.
\begin{defn}\label{randomlocaldef}
Suppose that we have, for each $n$, a random probability measure $\mu_n$ on $\cp_n$. We will say that the sequence $(\mu_n)_{n\ge 1}$ exhibits path localization in the limit if, whenever $\ell(\mu_n)$ converges in law to a random variable $X$ through a subsequence, we have $\pp(X>0)=1$.
\end{defn}
It is not difficult to see that the above definition is equivalent to demanding that for any $\ep>0$ there is some $\delta>0$ such that
\[
 \limsup_{n\to\infty} \pp(\ell(\mu_n)\le \delta) \le \ep.
\] 
Yet another equivalent criterion is that $(1/\ell(\mu_n))_{n\ge 1}$ is a tight family of random variables. 

In the literature, localization of a measure on paths is sometimes measured by the expected overlap, that is, by the quantity
\begin{equation}\label{overlapdef}
\rho(\mu) := \frac{\ee|P\cap P'|}{n},
\end{equation}
where $P$ and $P'$ are independent paths with law $\mu$. The following simple result shows that the measures $\ell(\mu)$ and $\rho(\mu)$ are equivalent for measuring localization.
\begin{prop}\label{equivprop}
For any probability measure $\mu$ on $\cp_n$,
\[
\ell(\mu)^2 \le \rho(\mu)\le \ell(\mu). 
\]
\end{prop}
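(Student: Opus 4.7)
My plan is to rewrite both sides in terms of the one‑time marginals of $P$. Let $P=(X_1,\ldots,X_n)$ have law $\mu$, and for each $k$ and $x\in\zz^d$ set $q_k(x):=\pp(X_k=x)$. Then for any fixed path $p=(x_1,\ldots,x_n)\in\cp_n$ we have
\[
\ell(\mu,p)=\frac{1}{n}\sum_{k=1}^n q_k(x_k),
\]
and, using independence of $P$ and $P'$,
\[
\rho(\mu)=\frac{1}{n}\sum_{k=1}^n\pp(X_k=X'_k)=\frac{1}{n}\sum_{k=1}^n\sum_{x}q_k(x)^2.
\]
Both inequalities will be read off from these two identities.

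For the upper bound $\rho(\mu)\le\ell(\mu)$, I would condition on $P'$ and write $\rho(\mu)=\ee[\ell(\mu,P')]$; since $\ell(\mu,p)\le\ell(\mu)$ for every $p\in\cp_n$, the bound follows immediately by taking expectation.

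For the lower bound $\ell(\mu)^2\le\rho(\mu)$, I would pick a maximizer $p^*=(x_1^*,\ldots,x_n^*)\in\argmax_{p}\ell(\mu,p)$ (which exists because $\cp_n$ is finite) and apply Cauchy--Schwarz in the index $k$:
\[
\bigl(n\,\ell(\mu)\bigr)^2=\Bigl(\sum_{k=1}^n q_k(x_k^*)\Bigr)^2\le n\sum_{k=1}^n q_k(x_k^*)^2.
\]
Next I would note $q_k(x_k^*)^2=\pp(X_k=x_k^*,X'_k=x_k^*)\le\pp(X_k=X'_k)=\sum_x q_k(x)^2$, so that
\[
n\sum_{k=1}^n q_k(x_k^*)^2\le n\sum_{k=1}^n\sum_x q_k(x)^2=n^2\rho(\mu),
\]
and dividing by $n^2$ gives $\ell(\mu)^2\le\rho(\mu)$.

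There is no real obstacle: the proposition is a one-line identity plus Cauchy--Schwarz once everything is phrased through $q_k$. The only point that is mildly non-trivial is choosing the right comparison for the lower bound, namely bounding the square of the best single-path overlap by the diagonal two-replica overlap via the trivial inequality $q_k(x_k^*)^2\le\sum_x q_k(x)^2$; any looser estimate (e.g.\ replacing $q_k(x_k^*)$ by $\max_x q_k(x)$ first) also works but the path-indexed form above is the most direct.
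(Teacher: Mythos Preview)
Your proof is correct and follows essentially the same approach as the paper: the upper bound is identical (condition on one replica and bound $\ee[\ell(\mu,P')]\le\ell(\mu)$), and for the lower bound both arguments combine the trivial inequality $q_k(x_k^*)^2\le\sum_x q_k(x)^2$ with Cauchy--Schwarz in $k$, the only cosmetic difference being the order in which these two steps are applied.
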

This result is proved in Section \ref{proofsec}. A consequence of this result is that the criterion for localization in Definition \ref{randomlocaldef} can be equivalently stated  in terms of $\rho(\mu_n)$ instead of $\ell(\mu_n)$. We can use either definition to study localization of random probability measures on paths. Proofs are easier to do with $\rho(\mu_n)$ because it arises naturally from integration by parts (which is in fact what's done in this paper). The definition using $\ell(\mu_n)$ is preferred in this manuscript because it probably appeals better to intuition. 

\subsection{Directed polymers in random environment}
Let $\cp_n$ be as in the previous subsection. Let $\nu$ be a probability measure on $\rr$. Let $(\omega_{k,x})_{k\ge 1, x\in \zz^d}$ be a collection of i.i.d.~random variables with law $\nu$. For each $n\ge 1$ and $\beta \ge0$, let $\mu_{\beta,n}$ be the random probability measure on $\cp_n$ that puts mass proportional to 
\[
\exp\biggl(\beta \sum_{k=1}^n \omega_{k,x_k}\biggr)
\]
on each path $p=(x_1,\ldots,x_n)\in \cp_n$. This defines the law of a  $(d+1)$-dimensional `directed polymer in random environment'. The model is sometimes also known as `random walk in random potential'. The parameter $\beta$ is called the inverse temperature, and $\mu_{\beta,n}$ is called the Gibbs measure or the polymer measure. 

This model was introduced by \citet{husehenley85} as a toy model for studying domain walls of the Ising model in the presence of impurities. It was presented in the mathematical literature as a model of directed polymers in random medium  by \citet{imbriespencer88} and \citet{bolthausen89}. Since then, this and other related models have generated an enormous literature in probability and mathematical physics. See \cite{bateschatterjee16, comets17, giacomin07, denhollander09} for partial surveys.  

A striking feature of the directed polymer model is that when $\beta$ is sufficiently large, the endpoint of the polymer localizes. This is a special instance of the wider phenomenon of localization in the presence of random impurities. The localization of the endpoint of directed polymers was first proved by \citet{cometsshigayoshida03} and \citet{carmonahu06}. A very detailed picture in an exactly solvable case was obtained by \citet{cometsnguyen16}. An abstract study of the endpoint localization phenomenon in a general setting was recently carried out in \citet{bateschatterjee16}, where the set of all possible limiting laws of the endpoint distribution was characterized using an abstract framework. There is also a different line of work on localization of directed polymers in heavy-tailed random environments~\cite{auffingerlouidor11, hamblymartin07, torri16, vargas07}. The challenges are quite different in that scenario, because in a heavy-tailed environment, the very large $\omega$'s are the ones that determine the nature of the Gibbs measure  and essentially determine the most likely polymer path. For the thorough discussion of the heavy-tailed case, see \cite[Chapter 6]{comets17}.

It is a longstanding folklore conjecture, supported by simulations, that it is not only the endpoint of the polymer that localizes at low temperature, but rather the whole path localizes. Let $\ell(\mu_{\beta, n})$ and $\rho(\mu_{\beta,n})$ denote the degree of localization and the expected overlap for the Gibbs measure $\mu_{\beta,n}$, as defined in the previous section. In a Gaussian random environment, it is known that when $\beta$ is large enough, 
\[
\liminf_{n\to\infty} \ee(\rho(\mu_{\beta,n})) >0.
\]
As far as I know, the proof of this result was written down for the first time in \citet[Chapter 6]{comets17}, although versions for slightly different models appeared earlier in the works of \citet{cometscranston13} and \citet{cometsyoshida13}.  In the language of statistical physics, this shows that path localization happens in the averaged sense. 

\subsection{Result}\label{resultsec}
The main result of this article, stated below, is that under certain conditions on the law $\nu$ of the environment, the directed polymer model has the property of path localization in the sense of Definition \ref{randomlocaldef}. In the language of statistical physics, this is one way of saying that path localization happens in the {\it quenched sense} at low temperature. 

The conditions on $\nu$ are as follows.  Suppose that $\nu$ has a probability density $f$ with respect to Lebesgue measure. We will assume that there is a bounded interval $(a,b)$ such that $f$ is nonzero and differentiable in $(a,b)$ and zero outside. Let 
\[
m := \int_a^b x f(x)dx
\]
be the expected value of $\nu$. For $x\in (a,b)$, let
\begin{equation}\label{hdef}
h(x) := \frac{\int_x^b (y-m) f(y)dy}{f(x)}. 
\end{equation}
We will assume that
\begin{equation}\label{hcond}
\sup_{a<x<b} |h'(x)|<\infty.
\end{equation}
This completes the list of all the assumptions that we will make about the law of the environment. It is not difficult to check that the conditions are satisfied for a large class of bounded random variables. For example, the uniform distribution on any bounded interval satisfies these conditions. 
\begin{thm}\label{mainthm}
Consider the model of $(d+1)$-dimensional directed polymers of length $n$ in i.i.d.~random environment. Suppose that the law of the environment satisfies the conditions listed above. Then there exist two positive constants $\beta_0$ and $C$, depending only on the law of environment and the dimension $d$, such that if $\beta\ge \beta_0$, then for any $\delta \in (0,1)$ and any $n$, 
\[
\pp(\ell(\mu_{\beta,n})\le \delta)\le \pp(\rho(\mu_{\beta,n}) \le \delta)\le e^{C\beta} \sqrt{\delta}.
\]
Consequently, the polymer measure exhibits path localization in the sense of Definition \ref{randomlocaldef}.
\end{thm}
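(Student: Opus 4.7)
The first inequality is immediate from Proposition~\ref{equivprop}, since $\rho(\mu)\le \ell(\mu)$ holds deterministically for every probability measure $\mu$ on $\cp_n$, and hence $\{\ell(\mu_{\beta,n})\le \delta\}\subseteq \{\rho(\mu_{\beta,n})\le \delta\}$. All of the real content is in the second inequality $\pp(\rho(\mu_{\beta,n})\le \delta)\le e^{C\beta}\sqrt{\delta}$.

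My basic tool will be the Stein-type integration-by-parts identity encoded in the definition (\ref{hdef}) of $h$: for any smooth $g$ on $(a,b)$,
\[
\ee[(X-m)\,g(X)] = \ee[h(X)\,g'(X)],
\]
and applied coordinatewise to the environment, $\ee[(\omega_{k,x}-m)\,F(\omega)] = \ee[h(\omega_{k,x})\,\partial_{k,x} F(\omega)]$ for any sufficiently regular functional $F$. Hypothesis (\ref{hcond}) together with the vanishing of the numerator in (\ref{hdef}) at $x=a$ and $x=b$ yields $\|h\|_\infty \le C_0 < \infty$. A direct calculation gives $\partial_{k,x}\log Z_{\beta,n} = \beta\,\mu_{\beta,n}(x_k=x)$, and therefore
\[
\sum_{k,x} (\partial_{k,x} \log Z_{\beta,n})^2 \;=\; \beta^2 n\,\rho(\mu_{\beta,n}),
\]
which is the algebraic bridge tying the overlap $\rho$ to the first-order environmental sensitivity of the free energy.

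My plan is to feed the integration-by-parts identity into a carefully chosen smooth proxy for $\mathbf{1}_{\{\rho(\mu_{\beta,n})\le \delta\}}$, with the $\sqrt{\delta}$ factor in the target bound arising from a Cauchy--Schwarz application against $\sqrt{\rho}\le \sqrt{\delta}$ available on the relevant event, and the $e^{C\beta}$ factor arising from a large-deviation estimate on the free energy $\log Z_{\beta,n}$. Concretely, I would attempt to control the expectation of $g(\rho(\mu_{\beta,n}))$ for a smooth cutoff $g$ supported on $[0,\delta]$: the derivative $\partial_{k,x}\rho$ expands via the chain rule into triple sums of one- and two-point marginals of $\mu_{\beta,n}$ (it is essentially a second derivative of $\log Z_{\beta,n}$), and $\|h\|_\infty\le C_0$ controls all environment-side factors. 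The low-temperature lower bound $\log Z_{\beta,n}\ge \beta M_n - n\log(2d)$, where $M_n=\max_p \sum_k \omega_{k,x_k}$, then provides the $e^{C\beta}$ budget in the final estimate.

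The main obstacle I anticipate is uniformity in $n$. A naive Poincar\'e-type bound obtained from integration by parts gives only $\var(\log Z_{\beta,n}) \le C_0\beta^2 n\,\ee[\rho(\mu_{\beta,n})]$, i.e., the averaged lower bound $\ee[\rho]\ge \var(\log Z_{\beta,n})/(C_0\beta^2 n)$, which degrades precisely in the superconcentration regimes (such as $d=1$) where the path localization conjecture is sharpest. Upgrading to a quenched, $n$-uniform tail bound of the advertised shape will likely force the argument to exploit a replica coupling, either by running integration by parts against the two-replica overlap $|P\cap P'|/n$ (whose environmental expectation is $\rho(\mu_{\beta,n})$) or by differentiating in an auxiliary inverse-temperature parameter to trade the spurious $n$-factor for entropy. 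Producing a single integration-by-parts identity that simultaneously suppresses the $n$-dependence and extracts the correct $\sqrt{\delta}$ tail is where I expect the technical heart of the proof to lie.
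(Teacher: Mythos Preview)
Your handling of the first inequality is fine, and you have correctly identified several of the raw ingredients (the Stein identity for $h$, the boundedness $\|h\|_\infty<\infty$, the relation $\sum_{k,x}(\partial_{k,x}\log Z_{\beta,n})^2=\beta^2 n\,\rho(\mu_{\beta,n})$, and the Poincar\'e route). But as you yourself flag, you do not yet have a mechanism that kills the $n$-dependence, and none of the devices you list --- a smooth cutoff applied to $\rho$, free-energy large deviations via $\log Z\ge \beta M_n-n\log(2d)$, or differentiation in an auxiliary temperature --- will produce one. The free-energy route in particular injects a $\beta$-dependence into the \emph{lower} bound that you cannot remove, whereas what the argument actually needs is a $\beta$-free lower bound somewhere.

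The missing idea is to work one time-slice at a time and to introduce the auxiliary functional
\[
\gamma_k \;=\; \sum_x h(\omega_{k,x})\,\theta_{k,x}, \qquad \theta_{k,x}=\mu_{\beta,n}(x_k=x),
\]
i.e.\ the polymer expectation of $h$ evaluated along the path at time $k$. Integration by parts is applied not to a cutoff of $\rho$ but to $\tau_k=\sum_x \omega_{k,x}\theta_{k,x}$, conditionally on $\mf_k=\sigma(\omega_{j,x}:j\ne k)$; since $\partial_{\omega_{k,x}}\theta_{k,x}=\beta\theta_{k,x}(1-\theta_{k,x})$, this yields $\gamma_k'\le L_2\alpha_k'+L_2/\beta$, where primes denote $\ee(\,\cdot\,|\mf_k)$ and $\alpha_k=\sum_x\theta_{k,x}^2$. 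The conditional Poincar\'e inequality then gives $\var(\gamma_k\mid\mf_k)\le C\beta^2\alpha_k'$, so on $\{\alpha_k'\le\ep\}$ one has $\gamma_k\approx\gamma_k'$ up to an error of size $\beta\sqrt{\ep}$ in expectation; this is where $\sqrt{\delta}$ comes from. The crucial point you are missing is that $\gamma(A)=\sum_{k\in A}\gamma_k$ admits a lower bound $\gamma(A)\ge\kappa|A|$ with probability $\ge 1-e^{-n}$ \emph{uniformly in $\beta$}, because $h>0$ pointwise implies $\gamma(A)\ge\min_{p\in\cp_n}\sum_{k\in A}h(\omega_{k,x_k})$, and the right side is a minimum over $(2d)^n$ paths of i.i.d.\ sums of strictly positive variables --- a union bound handles it with no reference to the Gibbs measure at all. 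If $|\{k:\alpha_k'\le\ep\}|\ge n/2$ and $\beta$ is large, the first two steps force $\gamma$ on that set to be small, contradicting this $\beta$-free floor; hence $\sum_k\alpha_k'\gtrsim n\ep$. Finally the bounded-support comparison $\alpha_k'\le e^{L_1\beta}\alpha_k$ (replace the time-$k$ environment by $0$) transfers this to $\rho=\frac1n\sum_k\alpha_k$ at the cost of the $e^{C\beta}$ factor.
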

Incidentally, the proof technique for Theorem \ref{mainthm} does not immediately generalize to the case of $\nu$ with unbounded support, such as Gaussian. 
\subsection{Simulation results and a conjecture}
To get a better understanding of the behavior of the quenched overlap, some simulations were carried out. The simulation results  indicated that in certain situations, quenched path localization may in fact hold in the following stronger sense. 
\begin{conj}\label{conj1}
Under suitable conditions on the law of the environment, there exists some $\beta_0$ such that for any $\beta \ge \beta_0$, there is some $\delta>0$ such that  
\[
\lim_{n\to \infty} \pp(\ell(\mu_{\beta,n})\ge \delta)=\lim_{n\to \infty} \pp(\rho(\mu_{\beta,n})\ge \delta) =1.
\]
\end{conj}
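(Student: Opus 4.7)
The plan is to combine two ingredients: a uniform-in-$n$ lower bound on $\ee[\rho(\mu_{\beta,n})]$, and a concentration estimate showing that $\rho(\mu_{\beta,n})$ hardly fluctuates about its mean as $n\to\infty$. The first ingredient is immediate from Theorem \ref{mainthm}: choosing $\delta_0 := \tfrac{1}{16}e^{-2C\beta}$ gives $\pp(\rho(\mu_{\beta,n})\le \delta_0)\le \tfrac{1}{4}$ for every $n$, whence $\ee[\rho(\mu_{\beta,n})]\ge 3\delta_0/4$ uniformly in $n$. If one can then show $\var(\rho(\mu_{\beta,n}))\to 0$ as $n\to\infty$, Chebyshev yields $\pp(\rho(\mu_{\beta,n})\ge \delta_0/2)\to 1$, and Conjecture \ref{conj1} follows with $\delta = \delta_0/2$.

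The heart of the argument is therefore a super-concentration estimate. The natural first attempt is the Efron--Stein / Gaussian Poincar\'e inequality applied to $\rho$ viewed as a functional of the i.i.d.\ environment. A direct computation yields $\partial_{\omega_{k,x}}\rho(\mu_{\beta,n}) = \tfrac{\beta}{n}\cov_{\mu_{\beta,n}\otimes\mu_{\beta,n}}(|P\cap P'|,\,\ii(P_k=x)+\ii(P'_k=x))$, pointwise bounded by $O(\beta)\cdot \mu_{\beta,n}(\{P_k=x\})$. Squaring and using $\sum_x \mu_{\beta,n}(\{P_k=x\})^2 \le 1$ gives $\sum_{k,x}|\partial_{\omega_{k,x}}\rho|^2 = O(\beta^2 n)$, hence only the useless bound $\var(\rho)=O(\beta^2 n)$. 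Going beyond requires exploiting cancellations. In order of decreasing ambition I would try (a) Talagrand's $L^2$ inequality and hypercontractivity in the spirit of the author's own super-concentration framework, aiming to show that $\rho$ is supported on high-degree chaos and therefore loses a factor of $1/\log n$; (b) a layer-by-layer martingale decomposition $\rho(\mu_{\beta,n}) = \sum_k(M_k-M_{k-1})$ with $M_k=\ee[\rho(\mu_{\beta,n})\mid \omega_{j,\cdot},\; j\le k]$, controlling the increments via the integration-by-parts identity that powers Theorem \ref{mainthm} so that overlap factors reappear inside each increment and can be absorbed into $\rho$ itself to give a self-improving inequality.

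The step I expect to be the main obstacle is precisely this upgrade from the trivial $O(n)$ variance bound to $o(1)$. Naive concentration fails because in the localized regime a single $\omega_{k,x}$ lying on the favorite path can move $\rho$ by an $O(1)$ amount: it is the \emph{location} of the favorite path that fluctuates with the environment, not the strength of localization, so symmetrization alone does not kill the variance. A fallback route is compactness-plus-0-1-law: Theorem \ref{mainthm} makes $\{\rho(\mu_{\beta,n})\}_n$ tight and forces every subsequential limit $X$ to satisfy $X>0$ almost surely; if one could then show that $X$ is measurable with respect to a tail or an exchangeable $\sigma$-algebra of $\omega$, Kolmogorov or Hewitt--Savage would force $X$ to be a.s.\ constant and Conjecture \ref{conj1} would follow for any $\delta$ smaller than that constant. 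Establishing such a measurability statement, however, again requires the same sensitivity estimate and is thus the super-concentration obstacle in another disguise.
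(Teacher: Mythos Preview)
The statement you are attempting to prove is presented in the paper as an open \emph{conjecture}, supported only by simulation evidence (Figure~\ref{fig1}); the paper gives no proof of it. So there is no ``paper's own proof'' to compare against, and the relevant question is whether your proposal actually closes the gap.

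It does not, and for a reason more fundamental than the difficulty you anticipate. Both of your routes---super-concentration ($\var(\rho(\mu_{\beta,n}))\to 0$) and the tail/exchangeable $0$--$1$ law---would, if they succeeded, prove strictly more than Conjecture~\ref{conj1}: they would force every subsequential limit of $\rho(\mu_{\beta,n})$ to be almost surely \emph{constant}. But the simulation evidence on which the conjecture is based points in exactly the opposite direction. The histogram in Figure~\ref{fig1} shows a limiting law that is genuinely spread out over an interval $(a,b)$ with $0<a<b<1$, which is perfectly consistent with Conjecture~\ref{conj1} (one needs only $a>0$) but flatly inconsistent with $\var(\rho(\mu_{\beta,n}))\to 0$ or with any $0$--$1$ law conclusion. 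Your own heuristic---that in the localized phase a single environment variable on the favorite path can shift $\rho$ by an amount of order one---is precisely the mechanism that makes the limit nondegenerate and that dooms both approaches.

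In short: your reduction to super-concentration is not merely hard, it is almost certainly false in the regime of interest, and the fallback $0$--$1$ argument fails for the same reason. A proof of Conjecture~\ref{conj1} will have to establish a uniform positive lower bound on $\rho(\mu_{\beta,n})$ (or on subsequential limits thereof) without forcing concentration on a point; this is a different and, as the paper makes clear, currently open problem.
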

For example, consider the model of a $(1+1)$-dimensional polymer in Uniform$[-1,1]$ environment, with $n=300$ and $\beta=3$. The model was simulated $1000$ times, and the quenched expectation of the overlap was computed for each simulation. Figure \ref{fig1} shows the histogram of these values. The histogram suggests that the limiting law of $\rho(\mu_{\beta,n})$ is supported on an interval $(a,b)$ where $a>0$ and $b<1$, thus giving evidence in favor of Conjecture \ref{conj1}. The simulation was repeated with other values of $n$ and $\beta$, and it showed similar conclusions on each occasion.

\begin{figure}[t]
\centering
\includegraphics[width = .95\textwidth]{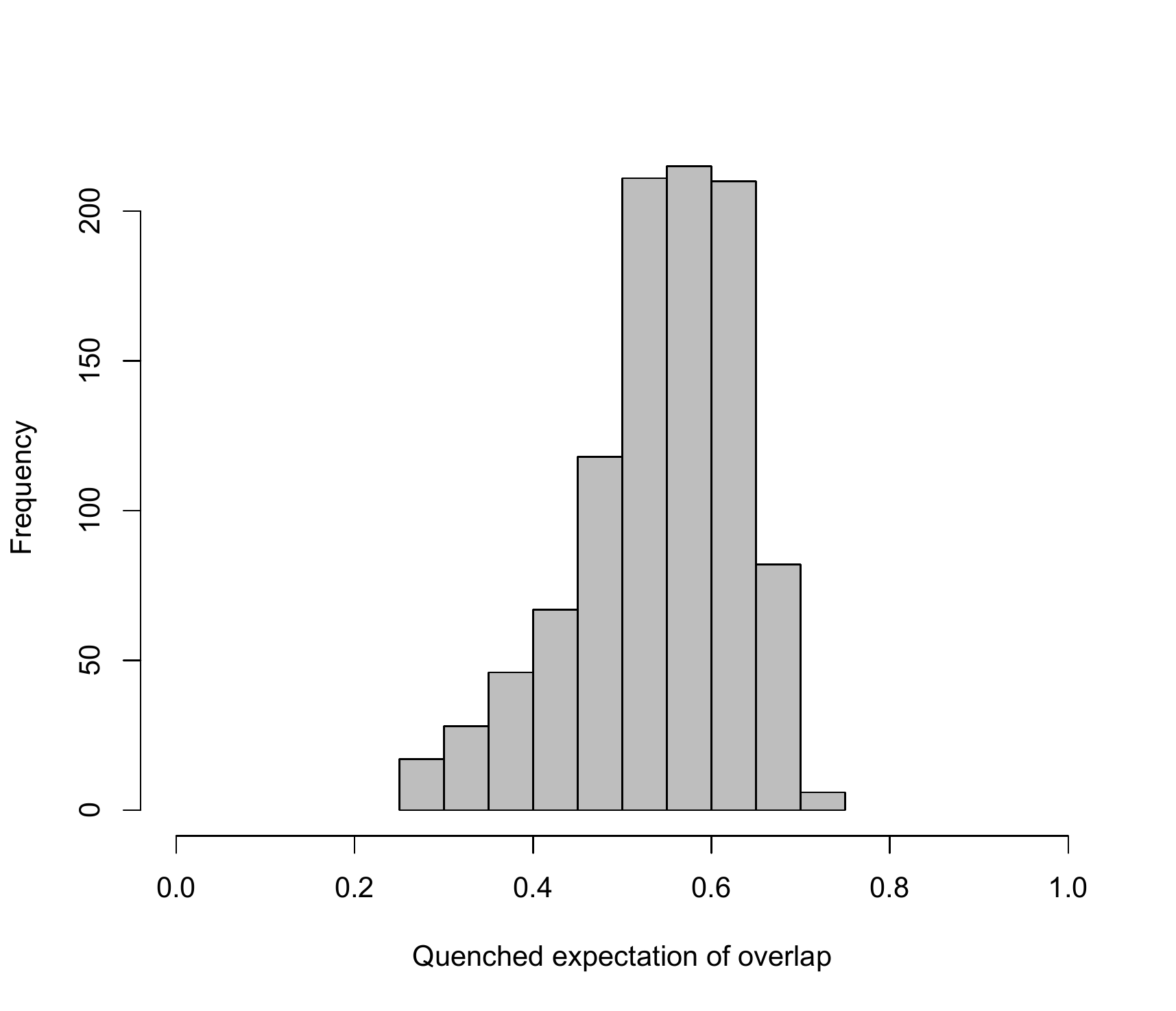}
\caption{Histogram of $1000$ simulated values of $\rho(\mu_{\beta,n})$. Here $d=1$, $n= 300$, $\beta = 3$, and the law of the environment is Uniform$[-1,1]$.}
\label{fig1}
\end{figure}

\section{Proof}\label{proofsec}
Let us begin by proving Proposition \ref{equivprop}.
\begin{proof}[Proof of Proposition \ref{equivprop}]
Let $P$ and $P'$ be independent random paths drawn from $\mu$. Note that by the independence of $P$ and $P'$,
\begin{align*}
\rho(\mu) &= \ee\biggl(\ee\biggl(\frac{|P\cap P'|}{n}\biggl|P\biggr)\biggr)= \ee(\ell(\mu, P))\le \ell(\mu). 
\end{align*}
Conversely, suppose that $p^*= (x_1^*,\ldots,x_n^*)$ is  path that maximizes $\ell(\mu,p)$. Let $P = (X_1,\ldots, X_k)$ be a random path drawn from $\mu$. Then
\begin{align*}
\ell(\mu) &= \ell(\mu, p^*)=\frac{1}{n}\sum_{k=1}^n \pp(X_k=x_k^*)\\
&\le \frac{1}{n}\sum_{k=1}^n \biggl(\sum_{x\in \zz^d} \pp(X_k=x)^2\biggr)^{1/2}\\
&\le \biggl(\frac{1}{n}\sum_{k=1}^n\sum_{x\in \zz^d} \pp(X_k=x)^2\biggr)^{1/2}= \sqrt{\rho(\mu)}.
\end{align*}
This completes the proof of the proposition.
\end{proof}
The next step is to record a few useful results about the law of the environment, under the assumptions made in Subsection \ref{resultsec}. The first lemma records two basic facts about the function $h$.
\begin{lmm}\label{hlmm}
The function $h$ defined in \eqref{hdef} is strictly positive everywhere in $(a,b)$ and uniformly bounded above.
\end{lmm}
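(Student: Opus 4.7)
The plan is to handle the two assertions separately, starting with positivity.

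For positivity, I would introduce the auxiliary function
\[
g(x) := \int_x^b (y-m) f(y)\, dy,
\]
so that $h = g/f$ and the sign of $h$ on $(a,b)$ is the sign of $g$ (since $f>0$ on $(a,b)$). The key observation is that $g$ vanishes at both endpoints: $g(b)=0$ trivially, and $g(a)=\int_a^b (y-m)f(y)\,dy=0$ by the very definition of $m$. By the fundamental theorem of calculus $g'(x) = -(x-m)f(x)$, which is strictly positive on $(a,m)$ and strictly negative on $(m,b)$ because $f$ is strictly positive on $(a,b)$. Hence $g$ rises from $0$ at $a$ to a positive maximum at $x=m$ and falls back to $0$ at $b$, so $g(x)>0$ for every $x\in(a,b)$, which gives $h(x)>0$.

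For the uniform upper bound, the idea is to leverage hypothesis \eqref{hcond}. Let $L:=\sup_{a<x<b}|h'(x)|<\infty$. Since $h$ is differentiable on $(a,b)$ with derivative bounded by $L$, it is Lipschitz with constant $L$ on the open interval. Pick any fixed reference point $y_0\in(a,b)$; then $f(y_0)>0$ and the numerator of $h(y_0)$ is bounded above by, say, $(b-m)$, so $h(y_0)$ is a finite positive number. Combining this with the Lipschitz bound,
\[
h(x) \le h(y_0) + L\,|x-y_0| \le h(y_0) + L(b-a)
\]
for every $x\in(a,b)$, which is the desired uniform upper bound.

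Neither step looks particularly delicate. The only place where care is warranted is verifying that $g(a)=0$ really follows from the definition of $m$ (one needs $\nu$ to be supported inside the closure of $(a,b)$, which is given), and that $m\in(a,b)$ so that the sign change of $g'$ actually occurs in the open interval; this is immediate because $f$ is a probability density strictly positive on $(a,b)$ and zero outside. Thus the argument is essentially a short calculus exercise, with condition \eqref{hcond} doing all the work for the boundedness half.
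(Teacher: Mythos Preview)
Your proof is correct and follows essentially the same approach as the paper. For positivity, the paper splits into the cases $x\ge m$ and $x\le m$ (using the identity $\int_a^b(y-m)f(y)\,dy=0$ to rewrite the numerator in the latter case), which is the pointwise version of your global monotonicity argument for $g$; for boundedness, both you and the paper simply invoke \eqref{hcond} together with the finite length of $(a,b)$.
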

\begin{proof}
It is not hard to see that $m\in (a,b)$. From this and the definition of $h$ it follows that if $x\in [m,b)$, then $h(x)>0$. On the other hand, by the definition of $m$,
\[
\int_a^b(y-m)f(y)dy=0.
\]
Thus, if $x\in (a,m]$, then
\[
h(x) = \frac{\int_a^x (m-y)f(y)dy}{f(x)},
\]
which is again strictly positive. The boundedness of $h$ follows simply by the boundedness of $|h'|$ assumed in \eqref{hcond} and the boundedness of the interval~$(a,b)$.
\end{proof}
The next lemma proves a crucial integration by parts formula for the probability density $f$.
\begin{lmm}\label{intlmm}
For any bounded differentiable function $g:(a,b)\to \rr$,
\[
\int_a^b (x-m) g(x)f(x)dx = \int_a^b h(x)g'(x)f(x)dx.
\]
\end{lmm}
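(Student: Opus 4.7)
The plan is very short: recognize $(x-m)f(x)$ as (minus) the derivative of the function appearing in the numerator of the definition of $h$, and then integrate by parts.

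Set $H(x) := \int_x^b (y-m)f(y)\,dy$, so that by the definition \eqref{hdef} we have the identity $H(x)=h(x)f(x)$ on $(a,b)$. By the fundamental theorem of calculus, $H'(x)=-(x-m)f(x)$ almost everywhere. The crucial observation is that $H$ vanishes at both endpoints: $H(b)=0$ is immediate from the definition, while
\[
H(a)=\int_a^b (y-m)f(y)\,dy = 0
\]
is precisely the defining relation for the mean $m$. So $H$ is an antiderivative of $-(x-m)f(x)$ that vanishes on the boundary of $(a,b)$, which is exactly the setup one wants for an integration by parts with no boundary contribution.

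Applying integration by parts to the bounded differentiable function $g$ against $H'$ then yields
\[
\int_a^b (x-m)\,g(x)\,f(x)\,dx = -\int_a^b g(x)\,H'(x)\,dx = \bigl[-g(x)H(x)\bigr]_a^b + \int_a^b g'(x)\,H(x)\,dx,
\]
and substituting $H=hf$ together with the vanishing boundary term gives the desired formula. Boundedness of $g$ together with $H(a)=H(b)=0$ makes the boundary term disappear, while boundedness of $h$ (from Lemma \ref{hlmm}) and of $g$, $g'$ in the relevant sense ensures that the integrals are well-defined and the IBP is legitimate.

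The only minor subtlety — and what I would consider the one point worth checking rather than an obstacle — is regularity at the endpoints: $f$ is only assumed differentiable on $(a,b)$ with no control at the boundary, but since $H$ is the indefinite integral of an $L^1$ function and vanishes at $a$ and $b$, the IBP identity goes through by a routine approximation on compact subintervals $[a+\ep,b-\ep]$ followed by $\ep\to 0$, using that $gH$ is bounded and $H(a\pm\ep),H(b-\ep)\to 0$. No further ingredients are needed.
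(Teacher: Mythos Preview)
Your proof is correct and essentially identical to the paper's: the paper defines the same auxiliary function (calling it $q$ rather than $H$), observes it vanishes at both endpoints, and integrates by parts against $g$. Your endpoint discussion via approximation on $[a+\ep,b-\ep]$ is a slightly more explicit justification of what the paper handles in one line by invoking dominated convergence for $q(x)\to 0$, but the content is the same.
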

\begin{proof}
Let 
\[
q(x) := h(x)f(x) = \int_x^b (y-m) f(y)dy = \int_a^x(m-y) f(y)dy. 
\]
By the dominated convergence theorem, it follows that $q(x)\to 0$ as $x\to b$ or $x\to a$. Therefore, since $g$ is bounded, integration by parts gives
\[
\int_a^b g'(x) q(x) dx = -\int_a^b g(x)q'(x)dx. 
\]
But $g'(x)q(x) = h(x) g'(x) f(x)$, and $g(x)q'(x) = -(x-m)g(x)f(x)$. This completes the proof. 
\end{proof}
The next lemma shows that the probability density $f$ satisfies what is known as a Poincar\'e inequality in the measure concentration literature.
\begin{lmm}\label{poin1}
If $X$ is a random variable with probability density $f$, then for any differentiable function $g:(a,b)\to \rr$ such that $g$ and $g'$ are uniformly bounded, we have 
\[
\var(g(X)) \le K\ee(g'(X)^2),
\]
where $K := \sup_{a<x<b} |h(x)|$. 
\end{lmm}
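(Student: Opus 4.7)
The plan is to bypass Lemma \ref{intlmm} itself and instead exploit the two equivalent representations
\[
q(x) := h(x) f(x) = \int_x^b (y - m) f(y) dy = \int_a^x (m - y) f(y) dy
\]
that were recorded in its proof. I start from the elementary fact that $\var(g(X)) \le \ee[(g(X) - c)^2]$ for any constant $c$, and choose $c = g(m)$. This choice is legitimate because the proof of Lemma \ref{hlmm} shows $m \in (a,b)$. By the fundamental theorem of calculus, $g(x) - g(m) = \int_m^x g'(y) dy$ for every $x \in (a,b)$.

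Next I apply a pointwise Cauchy--Schwarz bound, separating the cases $x > m$ and $x < m$, to get
\[
(g(x) - g(m))^2 \le |x - m| \int_{m \wedge x}^{m \vee x} g'(y)^2 dy.
\]
Taking expectation against $f$ and splitting the outer integral according to the sign of $x - m$ yields a pair of double integrals to which I apply Fubini; this is the main computational step. After swapping, the inner integral over $x$ produces $\int_y^b (x - m) f(x) dx$ on $(m,b)$ and $\int_a^y (m - x) f(x) dx$ on $(a,m)$, and by the dual formulas for $q$ above each of these equals $q(y) = h(y) f(y)$.

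The calculation therefore collapses to
\[
\ee[(g(X) - g(m))^2] \le \int_a^b g'(y)^2 h(y) f(y) dy \le K \int_a^b g'(y)^2 f(y) dy = K \ee[g'(X)^2],
\]
where the middle inequality uses the uniform upper bound on $h$ from Lemma \ref{hlmm}. Combined with $\var(g(X)) \le \ee[(g(X) - g(m))^2]$ from the first paragraph, this proves the lemma. There is no substantive technical obstacle: because $g$ and $g'$ are bounded and $(a,b)$ is a bounded interval, all integrals are finite and the Fubini interchange is unconditional. The content of the argument is the recognition that the weight $|x - m| f(x)$ produced by Cauchy--Schwarz is, after integration in $x$, exactly the function $q = hf$, which already carries the factor $h$ that must appear on the right-hand side.
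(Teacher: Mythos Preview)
Your proof is correct and follows essentially the same approach as the paper's own proof: start from $\var(g(X))\le \ee[(g(X)-g(m))^2]$, apply Cauchy--Schwarz pointwise to $(g(x)-g(m))^2$, integrate against $f$, swap the order of integration via Fubini, and recognize the resulting inner integral as $q(y)=h(y)f(y)$ on each of $(a,m)$ and $(m,b)$. The only cosmetic difference is that the paper writes out the two cases $x\ge m$ and $x\le m$ separately rather than using the compact $|x-m|\int_{m\wedge x}^{m\vee x}$ notation.
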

\begin{proof}
Note that
\begin{align*}
\var(g(X)) \le \ee(g(X)-g(m))^2 = \int_a^b (g(x)-g(m))^2 f(x) dx.
\end{align*}
Now, if $x\ge m$, then
\begin{align*}
(g(x)-g(m))^2 &= \biggl(\int_m^x g'(y) dy\biggr)^2\\
&\le (x-m) \int_m^x g'(y)^2 dy.
\end{align*}
Similarly, if $x\le m$, then 
\begin{align*}
(g(x)-g(m))^2 &= \biggl(\int_x^m g'(y)dy\biggr)^2\\
&\le (m-x) \int_x^m g'(y)^2dy.
\end{align*}
Putting together the last three displays, we get
\begin{align*}
\int_m^b (g(x)-g(m))^2f(x) dx &\le \int_m^b \int_m^x (x-m)g'(y)^2f(x) dydx\\
&= \int_m^b \int_y^b (x-m) g'(y)^2 f(x)dxdy\\
&= \int_m^b g'(y)^2 h(y) f(y) dy,
\end{align*}
and similarly,
\[
\int_a^m (g(x)-g(m))^2f(x) dx \le \int_a^m g'(y)^2 h(y) f(y)dy. 
\]
Combining, we have
\[
\int_a^b (g(x)-g(m))^2 f(x)dx \le \int_a^b g'(x)^2 h(x)f(x) dx. 
\]
This completes the proof of the lemma. 
\end{proof}
It is well known that if a measure satisfies a Poincar\'e inequality, its $n$-fold product also satisfies a Poincar\'e inequality with the same multiplicative constant. This is known as the tensorization property of Poincar\'e inequalities. The proof of the tensorization property is quite simple, so we present it here to save the curious reader the trouble of looking up the proof. 
\begin{lmm}\label{poinlmm}
Let $X_1,\ldots, X_n$ be i.i.d.~random variables with probability density $f$. Then for any differentiable function $g:(a,b)^n \to \rr$ such that $g$ and its partial derivatives are uniformly bounded, we have
\[
\var(g(X_1,\ldots,X_n)) \le K \sum_{i=1}^n \ee(\partial_i g(X_1,\ldots,X_n)^2),
\]
where $K = \sup_{a<x<b} |h(x)|$, and $\partial_i g$ is the partial derivative of $g$ in  coordinate $i$. 
\end{lmm}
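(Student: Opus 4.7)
The plan is to prove the inequality by induction on $n$, using Lemma \ref{poin1} as the base case $n=1$. The main tool for the inductive step is the law of total variance applied to the split between the first coordinate and the remaining ones:
\[
\var(g(X_1,\ldots,X_n)) = \ee[\var(g(X_1,\ldots,X_n)\mid X_2,\ldots,X_n)] + \var(\ee[g(X_1,\ldots,X_n)\mid X_2,\ldots,X_n]).
\]

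For the first term, I would fix $X_2,\ldots,X_n$ and apply the one-dimensional Poincar\'e inequality (Lemma \ref{poin1}) to the map $x_1 \mapsto g(x_1,X_2,\ldots,X_n)$, which by hypothesis is differentiable in $x_1$ with uniformly bounded derivative. This yields
\[
\var(g\mid X_2,\ldots,X_n) \le K\, \ee[(\partial_1 g)^2 \mid X_2,\ldots,X_n],
\]
and integrating out contributes $K\,\ee[(\partial_1 g(X_1,\ldots,X_n))^2]$ to the bound. For the second term, I would set $G(x_2,\ldots,x_n) := \ee[g(X_1,x_2,\ldots,x_n)]$ and invoke the inductive hypothesis on $G$ as a function of the $n-1$ i.i.d.~variables $X_2,\ldots,X_n$. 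Differentiation under the integral sign (justified by the uniform boundedness of $\partial_i g$) gives $\partial_i G(x_2,\ldots,x_n) = \ee[\partial_i g(X_1,x_2,\ldots,x_n)]$ for each $i \ge 2$, and Jensen's inequality then yields $(\partial_i G(X_2,\ldots,X_n))^2 \le \ee[(\partial_i g(X_1,\ldots,X_n))^2 \mid X_2,\ldots,X_n]$. Summing over $i=2,\ldots,n$ and combining with the contribution from the first term closes the induction.

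The only delicate point is bookkeeping of regularity: one must verify that $G$ inherits the hypotheses of the inductive statement, namely that $G$ together with each partial derivative $\partial_i G$ is uniformly bounded. Both properties follow immediately from the uniform boundedness of $g$ and its partials together with the differentiation-under-the-integral identity above, so the induction goes through without incident. Everything else is a mechanical unwinding of conditional variance and Jensen's inequality, with no probabilistic subtlety beyond what Lemma \ref{poin1} already supplies.
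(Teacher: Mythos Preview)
Your proposal is correct and follows essentially the same argument as the paper: induction on $n$ via the law of total variance, the one-dimensional Poincar\'e inequality for the conditional variance term, and the inductive hypothesis together with differentiation under the integral and Jensen for the conditional-expectation term. The only cosmetic difference is that you split off the first coordinate while the paper splits off the last.
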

\begin{proof}
Lemma \ref{poin1} proves the claim for $n=1$. Suppose that $n\ge 2$ and that the claim holds for $n-1$. Let 
\[
g_0(x_1,\ldots,x_{n-1}) := \int_a^b g(x_1,\ldots,x_n) f(x_n)dx_n.
\]
Then the boundedness of derivatives implies that for each $1\le i\le n-1$, 
\[
\partial_i g_0(x_1,\ldots,x_{n-1}) = \int_a^b \partial_i g(x_1,\ldots,x_n) f(x_n) dx_n,
\]
and hence
\begin{align}\label{induc}
\partial_i g_0(x_1,\ldots,x_{n-1})^2 \le \int_a^b \partial_i g(x_1,\ldots,x_n)^2 f(x_n) dx_n.
\end{align}
Now note that
\[
g_0(X_1,\ldots,X_{n-1}) = \ee(g(X_1,\ldots,X_n)|X_1,\ldots,X_{n-1}). 
\]
Thus, by the standard decomposition of variance as the sum of expected value of conditional variance and the variance of conditional expectation, we have 
\begin{align*}
\var(g(X_1,\ldots,X_n)) &= \ee(\var(g(X_1,\ldots,X_n)|X_1,\ldots,X_{n-1})) \\
&\qquad + \var(g_0(X_1,\ldots, X_{n-1})).
\end{align*}
But by the case $n=1$, 
\begin{align*}
&\var(g(X_1,\ldots,X_n)|X_1,\ldots,X_{n-1})\\
&\le K \ee(\partial_n g(X_1,\ldots,X_n)^2|X_1,\ldots,X_{n-1}),
\end{align*}
and hence
\begin{align*}
&\ee(\var(g(X_1,\ldots,X_n)|X_1,\ldots,X_{n-1}))\\
&\le K \ee(\partial_n g(X_1,\ldots,X_n)^2).
\end{align*}
On the other hand, by the case $n-1$, 
\[
\var(g_0(X_1,\ldots, X_{n-1})) \le K\sum_{i=1}^{n-1}\ee(\partial_i g_0(X_1,\ldots,X_{n-1})^2). 
\]
But by \eqref{induc},
\[
\ee(\partial_i g_0(X_1,\ldots,X_{n-1})^2) \le \ee(\partial_i g(X_1,\ldots,X_n)^2). 
\]
The proof is now completed by putting together the above estimates. 
\end{proof}

We are now ready to start the proof of Theorem \ref{mainthm}. Throughout the proof we will assume without loss that $\beta \ge 1$.   We will also assume without loss that the mean $m$ of $\nu$ equals $0$, so that $a<0$ and $b>0$. There is no loss in this second assumption because subtracting off a constant from the environment variables does not change the Gibbs measure.

Let us begin by defining some random variables for later use. For each $k\ge 1$ and $x\in \zz^d$, let $\theta_{k,x}$ be the probability that a path drawn from the Gibbs measure $\mu_{\beta,n}$ passes through $x$ at step $k$. For each $k$, define a random variable
\[
\alpha_k := \sum_{x} \theta_{k,x}^2.
\]
Note that $\alpha_k$  is the probability that two independent paths drawn from the Gibbs measure meet at step $k$. In particular, 
\[
\alpha_k \le \sum_x \theta_{k,x}=1,
\]
and since $\theta_{k,x} =0$ for any $x$ at a distance greater than $k$ from the origin, the Cauchy--Schwarz inequality shows that 
\begin{equation}\label{alphalow}
\alpha_k \ge \frac{(\sum_x \theta_{k,x})^2}{(2k+1)^d} = \frac{1}{(2k+1)^d}. 
\end{equation}
For each $k$, let $\mf_k$ be the $\sigma$-algebra generated by the random variables $\{\omega_{j,x}: j\ne k, x\in \zz^d\}$, and let $\alpha'_k := \ee(\alpha_k|\mf_k)$. The following lemma allows us to control $\alpha_k'$ in terms of $\alpha_k$. Recall that $\nu$ is the law of the environment.
\begin{lmm}\label{alphamainlmm}
There is a positive constant $L_1$ depending only on $\nu$, such that for any $1\le k\le n$, $\alpha_k' \le e^{L_1\beta} \alpha_k$. Explicitly, we can take $L_1=4(b-a)$. 
\end{lmm}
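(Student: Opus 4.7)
The plan is to introduce an $\mf_k$-measurable proxy for each occupation probability $\theta_{k,x}$ and control the true quantities by pointwise exponential bounds. Grouping paths in $\cp_n$ according to the vertex they visit at step $k$, I would define
\[
\hq_x := \sum_{\substack{p\in\cp_n\\ x_k=x}}\exp\Bigl(\beta\sum_{j\ne k}\omega_{j,x_j}\Bigr),
\]
which is $\mf_k$-measurable. Then the partition function factors as $Z_{\beta,n}=\sum_y e^{\beta\omega_{k,y}}\hq_y$, and consequently
\[
\theta_{k,x}=\frac{e^{\beta\omega_{k,x}}\hq_x}{\sum_y e^{\beta\omega_{k,y}}\hq_y}.
\]
Next I would introduce the $\mf_k$-measurable proxy weights $\pi_{k,x}:=\hq_x/\sum_y\hq_y$ (a probability measure on $\zz^d$, well defined because $\cp_n\ne\emptyset$ forces some $\hq_y>0$) and set $\tilde\alpha_k:=\sum_x\pi_{k,x}^2$.

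Since $\nu$ is supported in $[a,b]$, each factor $e^{\beta\omega_{k,y}}$ lies in $[e^{\beta a},e^{\beta b}]$. Writing
\[
\frac{\theta_{k,x}}{\pi_{k,x}}=\frac{e^{\beta\omega_{k,x}}\sum_y\hq_y}{\sum_y e^{\beta\omega_{k,y}}\hq_y}
\]
and bounding numerator and denominator separately by their extreme values yields the pointwise two-sided estimate
\[
e^{-\beta(b-a)}\pi_{k,x}\le\theta_{k,x}\le e^{\beta(b-a)}\pi_{k,x}.
\]
Squaring and summing in $x$ then gives simultaneously
\[
\alpha_k\le e^{2\beta(b-a)}\tilde\alpha_k \qquad\text{and}\qquad \tilde\alpha_k\le e^{2\beta(b-a)}\alpha_k.
\]

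To conclude, I would use that $\tilde\alpha_k$ is $\mf_k$-measurable, so taking conditional expectation in the first display gives $\alpha_k'\le e^{2\beta(b-a)}\tilde\alpha_k$, and then chaining with the second inequality (which holds for the realized environment) produces
\[
\alpha_k'\le e^{4\beta(b-a)}\alpha_k,
\]
which is the desired bound with $L_1=4(b-a)$. I do not expect a serious obstacle here: the one point that must be arranged carefully is the clean isolation of the $\omega_{k,\cdot}$-dependence into a single ratio so that $\pi_{k,x}$ is genuinely $\mf_k$-measurable — once that factorization is set up, everything else reduces to crude bounds on bounded exponentials, and none of the Poincar\'e or integration-by-parts machinery developed earlier in this section is needed.
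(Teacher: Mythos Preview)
Your proposal is correct and is essentially the paper's own argument: your proxy weights $\pi_{k,x}=\hq_x/\sum_y\hq_y$ are exactly the occupation probabilities $\zeta_{k,x}$ the paper obtains from the modified Gibbs measure with $\omega_{k,\cdot}$ set to $0$, and the two-sided bound $e^{-\beta(b-a)}\pi_{k,x}\le\theta_{k,x}\le e^{\beta(b-a)}\pi_{k,x}$ followed by squaring, summing, and using $\mf_k$-measurability of $\tilde\alpha_k$ is precisely the chain the paper runs to reach $\alpha_k'\le e^{4\beta(b-a)}\alpha_k$.
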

\begin{proof}
Fix some $k$. Consider the modified Gibbs measure obtained by replacing $\omega_{k,x}$ with $0$ for every $x$. Let $\zeta_{k,x}$ be the probability that a path chosen from this modified measure visits $x$ at step $k$. 

Let $\omega'$ denote the modified environment, that is,
\[
\omega'_{j,x} =
\begin{cases}
\omega_{j,x} &\text{ if } j\ne k,\\
0 &\text{ if } j=k.
\end{cases}
\]
Since $\nu$ is supported on $(a,b)$, it follows for any path $p= (x_1,\ldots,x_n)$, 
\[
e^{-\beta b}\exp\biggl(\beta\sum_{j=1}^n\omega_{j,x_j}\biggr)\le \exp\biggl(\beta\sum_{j=1}^n\omega'_{j,x_j}\biggr) \le e^{-\beta a}\exp\biggl(\beta\sum_{j=1}^n\omega_{j,x_j}\biggr)
\]
If $Z$ and $Z'$ are the normalizing constants for the two models, then summing the above expression over all $p$ gives 
\[
e^{-\beta b} Z\le Z'\le e^{-\beta a}Z. 
\]
Similarly, if $Z_{k,x}$ and $Z_{k,x}'$ are the sum of weights over all paths passing through $x$ at time $k$ in the two models, then 
\[
e^{-\beta b} Z_{k,x}\le Z'_{k,x}\le e^{-\beta a}Z_{k,x}.
\]
Since $\theta_{k,x} = Z_{k,x}/Z$ and $\zeta_{k,x} = Z_{k,x}'/Z'$, the last two displays imply that
\begin{equation*}
e^{-\beta(b-a)} \theta_{k,x}\le \zeta_{k,x} \le e^{\beta(b-a)} \theta_{k,x}. 
\end{equation*}
Also, note that $\zeta_{k,x}$ is an $\mf_k$-measurable random variable. Thus,
\begin{align*}
\alpha_k' &= \ee\biggl(\sum_x \theta_{k,x}^2 \biggl|\mf_k\biggr)\le \ee\biggl(\sum_x e^{2\beta(b-a)}\zeta_{k,x}^2 \biggl|\mf_k\biggr)\\
&= e^{2\beta(b-a)}\sum_x \zeta_{k,x}^2\le e^{4\beta(b-a)} \sum_x\theta_{k,x}^2 = e^{4\beta(b-a)} \alpha_k.
\end{align*}
This completes the proof of the lemma.
\end{proof}
Next, let
\[
 \gamma_k := \sum_x h(\omega_{k,x}) \theta_{k,x}, \ \ \gamma_k' := \ee(\gamma_k |\mf_k). 
\]
From here, the proof goes roughly as follows. Our goal is to show that with high probability, $\frac{1}{n}\sum\alpha_k$ is not small. By Lemma \ref{alphamainlmm}, it suffices to prove this for $\frac{1}{n}\sum \alpha_k'$ instead. This is done in four steps. 

The first step is to prove an inequality of the form $\gamma_k'\le C\alpha_k' + C/\beta$, where $C$ is a universal constant. This is proved using integration by parts. The second step is to show that if $\alpha_k'$ is small for some $k$, then with high probability, $\gamma_k$ has small fluctuations conditional on $\mf_k$, and hence  $\gamma_k \approx \gamma_k'$. This is established using the Poincar\'e inequality (Lemma \ref{poinlmm}), because calculations yield $\alpha_k'$ on the right side of the inequality. The third step is to show that with high probability, $\frac{1}{n}\sum \gamma_k$ cannot be smaller than a threshold that is independent of  $\beta$. This is proved using lower tail inequalities for sums of independent random variables. Finally, in the fourth step, we combine the first three steps to complete the proof, as follows: If $\frac{1}{n}\sum\alpha_k'$ is sufficiently small and $\beta$ is sufficiently large, then by the first step, we can make $\frac{1}{n}\sum \gamma_k'$  as small as we like. In particular, we can make it smaller than the threshold from the third step. But also, if $\frac{1}{n}\sum\alpha_k'$ is small, the second step dictates that $\frac{1}{n}\sum \gamma_k \approx \frac{1}{n}\sum \gamma_k'$. This yields a contradiction to the third step, completing the proof. 

The following lemma allows us to control $\gamma_k'$ in terms of $\alpha_k'$. 
\begin{lmm}\label{alphagammalmm}
There is a positive constant $L_2$ depending only on $\nu$ such that for any $1\le k\le n$,
\[
\gamma_k' \le L_2 \alpha_k' + \frac{L_2}{\beta}.
\]
\end{lmm}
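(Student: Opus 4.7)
The plan is to split $h(\omega_{k,x})\theta_{k,x}=h(\omega_{k,x})\theta_{k,x}^{2}+h(\omega_{k,x})\theta_{k,x}(1-\theta_{k,x})$ and handle the two pieces separately. The first piece is trivially bounded: by Lemma \ref{hlmm}, $h$ is uniformly bounded above by some constant $K$, so summing over $x$ gives $\sum_x h(\omega_{k,x})\theta_{k,x}^{2}\le K\alpha_k$, and taking conditional expectation yields $K\alpha_k'$. For the second piece I would apply the integration-by-parts formula of Lemma \ref{intlmm} in the single variable $\omega_{k,x}$; this is exactly the place where the regularity of the density $f$ buys us the $1/\beta$ we need.

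The key calculation is that if we hold all environment variables fixed except $\omega_{k,x}$ and view $\theta_{k,x}$ as a function of this one variable, then a direct differentiation of the Gibbs probability gives $\partial_{\omega_{k,x}}\theta_{k,x}=\beta\theta_{k,x}(1-\theta_{k,x})$. Applying Lemma \ref{intlmm} with $g=\theta_{k,x}$, which is legitimate because $\theta_{k,x}\in[0,1]$ is bounded and smooth in $\omega_{k,x}$, and using the normalization $m=0$ yields, after first integrating out $\omega_{k,x}$ and then averaging over $(\omega_{k,y})_{y\ne x}$ conditional on $\mf_k$,
\[
\ee\bigl[h(\omega_{k,x})\theta_{k,x}(1-\theta_{k,x})\,\big|\,\mf_k\bigr]=\frac{1}{\beta}\,\ee\bigl[\omega_{k,x}\theta_{k,x}\,\big|\,\mf_k\bigr].
\]
Summing on $x$ and using the pointwise bound $\omega_{k,x}\le b$ together with $\sum_x\theta_{k,x}=1$ bounds the right-hand side by $b/\beta$. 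Setting $L_2:=\max(K,b)$ then completes the proof.

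The only real subtlety is spotting the decomposition: one has to notice that the factor $\beta\theta_{k,x}(1-\theta_{k,x})$ that falls out of Gibbs differentiation is \emph{precisely} what Lemma \ref{intlmm} produces on its right-hand side, and that the quantity $\omega_{k,x}\theta_{k,x}$ it is traded against is uniformly bounded thanks to the assumed compact support of $\nu$. Once this match is seen, the $1/\beta$ is automatic and the residual $\theta_{k,x}^{2}$ term contributes the $\alpha_k'$ piece with constant $K$. Everything else reduces to standard conditioning and Fubini-type manipulations, so I would not expect any further obstacles in writing out the full argument.
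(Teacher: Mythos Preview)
Your proposal is correct and is essentially the same argument as the paper's. The paper starts from $\tau_k:=\sum_x\omega_{k,x}\theta_{k,x}$, applies Lemma~\ref{intlmm} to obtain $\ee(\tau_k\mid\mf_k)=\beta\sum_x\ee(h(\omega_{k,x})\theta_{k,x}(1-\theta_{k,x})\mid\mf_k)$, lower-bounds this by $\beta\gamma_k'-K\beta\alpha_k'$, and upper-bounds $\ee(\tau_k\mid\mf_k)$ by $b$; you perform the same steps in the reverse order (split $\theta_{k,x}=\theta_{k,x}^2+\theta_{k,x}(1-\theta_{k,x})$ first, then apply the integration-by-parts identity), arriving at the identical constants $K$ and $b$.
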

\begin{proof}
Let
\[
\tau_k := \sum_x\omega_{k,x} \theta_{k,x}.
\]
It is not hard to check that 
\[
 \fpar{\theta_{k,x}}{\omega_{k,x}} = \beta \theta_{k,x} (1-\theta_{k,x}).
\]
Therefore, integration by parts (Lemma \ref{intlmm}) and the boundedness of $h$ (Lemma \ref{hlmm}) give
\begin{align*}
\ee(\tau_k|\mf_k) &= \sum_x \ee(\omega_{k,x} \theta_{k,x}|\mf_k)\\
&= \sum_x \ee\biggl(h(\omega_{k,x}) \fpar{\theta_{k,x}}{\omega_{k,x}}\biggl|\mf_k\biggr)\\
&= \beta \sum_x\ee(h(\omega_{k,x}) \theta_{k,x} (1-\theta_{k,x})|\mf_k)\\
&\ge \beta \gamma_k' - C\beta \alpha_k',
\end{align*}
where $C$ is a positive constant that depends only on $\nu$. 
On the other hand, since $\nu$ in supported on $(a,b)$, it follows that 
\[
\ee(\tau_k|\mf_k)\le b\ee\biggl(\sum_x \theta_{k,x}\biggl|\mf_k\biggr) = b. 
\]
The proof is completed by combining the two inequalities.
\end{proof}
Next, for each $\ep>0$, let
\begin{equation}\label{sdef}
S_\ep := \{1\le k\le n: \alpha_k'\le \ep\}.
\end{equation}
The following lemma gives a preliminary control on the size of $S_\ep$ using $\gamma_k$ and $\gamma_k'$. This will be used later to obtain a better control on the size of $S_\ep$. 
\begin{lmm}\label{gammabdlmm}
There is a positive constant $L_3$ depending only on $\nu$ such that for any $\ep>0$, 
\begin{align*}
\ee\biggl|\sum_{k\in S_\ep}(\gamma_k-\gamma_k')\biggr|&\le L_3 n \beta \sqrt{\ep}.
\end{align*}
\end{lmm}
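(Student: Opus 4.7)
The plan is to bound the sum term by term and apply the conditional Poincaré inequality. Since $\alpha_k'$ is $\mf_k$-measurable, the indicator $\mathbf 1_{\{k\in S_\ep\}}$ is $\mf_k$-measurable, so
\[
\ee\biggl|\sum_{k\in S_\ep}(\gamma_k-\gamma_k')\biggr|\le \sum_{k=1}^n \ee\bigl(\mathbf 1_{\{k\in S_\ep\}}\, \ee(|\gamma_k-\gamma_k'|\mid \mf_k)\bigr)\le \sum_{k=1}^n \ee\bigl(\mathbf 1_{\{k\in S_\ep\}}\sqrt{\var(\gamma_k\mid \mf_k)}\bigr)
\]
by conditional Jensen/Cauchy--Schwarz. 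The goal is thus to show $\var(\gamma_k\mid \mf_k)\le C\beta^2 \alpha_k'$, after which the indicator forces $\alpha_k'\le \ep$ and the sum is at most $Cn\beta\sqrt{\ep}$.

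To control the conditional variance, I would condition on $\mf_k$ so that $\gamma_k$ becomes a function of the i.i.d.\ family $(\omega_{k,x})_x$ alone (only finitely many matter, since $\theta_{k,x}=0$ for $|x|>k$). Then Lemma \ref{poinlmm} applies conditionally and yields
\[
\var(\gamma_k\mid \mf_k)\le K \sum_x \ee\Bigl(\bigl(\partial_{\omega_{k,x}} \gamma_k\bigr)^2 \,\Big|\, \mf_k\Bigr).
\]
The main computation is the partial derivative. Using $\partial_{\omega_{k,x}}\theta_{k,x}=\beta \theta_{k,x}(1-\theta_{k,x})$ and $\partial_{\omega_{k,x}}\theta_{k,y}=-\beta \theta_{k,x}\theta_{k,y}$ for $y\neq x$, direct differentiation of $\gamma_k=\sum_y h(\omega_{k,y})\theta_{k,y}$ gives
\[
\partial_{\omega_{k,x}}\gamma_k = h'(\omega_{k,x})\theta_{k,x} + \beta\theta_{k,x}\bigl(h(\omega_{k,x})-\gamma_k\bigr).
\]
Since $h$, $h'$, and $\gamma_k$ are all uniformly bounded (by Lemma \ref{hlmm} and the standing hypothesis \eqref{hcond}), and since $\beta\ge 1$, there is a constant $C=C(\nu)$ with $(\partial_{\omega_{k,x}}\gamma_k)^2 \le C\beta^2\theta_{k,x}^2$. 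Summing over $x$ and using $\sum_x \theta_{k,x}^2=\alpha_k$ yields
\[
\var(\gamma_k\mid \mf_k) \le C\beta^2\, \ee(\alpha_k\mid \mf_k) = C\beta^2 \alpha_k'.
\]

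Combining everything, for each $k\in S_\ep$ we have $\sqrt{\var(\gamma_k\mid \mf_k)}\le \sqrt{C}\,\beta\sqrt{\ep}$, so
\[
\ee\biggl|\sum_{k\in S_\ep}(\gamma_k-\gamma_k')\biggr|\le \sum_{k=1}^n \sqrt{C}\,\beta\sqrt{\ep} = L_3\, n\beta\sqrt{\ep},
\]
with $L_3=\sqrt{C}$ depending only on $\nu$. The only step that requires care is the derivative computation and verifying that the Poincaré inequality applies (boundedness of $\gamma_k$ and its partials as a function of $\omega_{k,\cdot}$); both are routine once one notes that $|x|\le k$ restricts the sum over $x$ to a finite set and that the factors $h$, $h'$, $\theta_{k,\cdot}$, $\gamma_k$ are all bounded.
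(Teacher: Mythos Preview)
Your proof is correct and follows essentially the same route as the paper: compute $\partial_{\omega_{k,x}}\gamma_k$, bound $\sum_x(\partial_{\omega_{k,x}}\gamma_k)^2\le C\beta^2\alpha_k$ using the boundedness of $h$, $h'$ and $\gamma_k$, apply the conditional Poincar\'e inequality (Lemma~\ref{poinlmm}) to get $\var(\gamma_k\mid\mf_k)\le C\beta^2\alpha_k'$, and then sum using the $\mf_k$-measurability of $\mathbf 1_{\{k\in S_\ep\}}$. The paper's derivative formula $h'(\omega_{k,x})\theta_{k,x}+\beta h(\omega_{k,x})\theta_{k,x}-\beta\sum_y h(\omega_{k,y})\theta_{k,y}\theta_{k,x}$ is identical to yours after recognizing the last sum as $\gamma_k$.
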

\begin{proof}
Note that
\begin{align*}
\fpar{\gamma_k}{\omega_{k,x}} &= h'(\omega_{k,x})\theta_{k,x} + \sum_y h(\omega_{k,y}) \fpar{\theta_{k,y}}{\omega_{k,x}}\\
&= h'(\omega_{k,x})\theta_{k,x} + \beta h(\omega_{k,x}) \theta_{k,x} - \beta \sum_y h(\omega_{k,y}) \theta_{k,y}\theta_{k,x}.
\end{align*}
Consequently,  by assumption \eqref{hcond} and Lemma \ref{hlmm}, and the assumption that $\beta\ge 1$, we have 
\begin{align*}
\sum_x \biggl(\fpar{\gamma_k}{\omega_{k,x}}\biggr)^2 &\le C\beta^2 \sum_x \theta_{k,x}^2 = C\beta^2 \alpha_k,
\end{align*}
where $C$ depends only on $\nu$. 
Thus, by the Poincar\'e inequality (Lemma \ref{poinlmm}),
\begin{align*}
\var(\gamma_k|\mf_k) \le CK\beta^2 \alpha_k'.
\end{align*}
Take any $\ep >0$. Then by the above bound,
\begin{align*}
\sum_{k=1}^n \ee(|\gamma_k -\gamma_k'|1_{\{\alpha_k'\le \ep\}})&= \sum_{k=1}^n \ee(\ee(|\gamma_k-\gamma_k'||\mf_k) 1_{\{\alpha_k'\le \ep\}})\\
&\le \sum_{k=1}^n \ee((\var(\gamma_k|\mf_k))^{1/2} 1_{\{\alpha_k'\le \ep\}})\\
&\le n\beta\sqrt{CK\ep}.
\end{align*}
This completes the proof of the lemma. 
\end{proof}
Next, for each $A\subseteq \{1,\ldots,n\}$, let 
\[
\gamma(A) := \sum_{k\in A} \gamma_k. 
\]
The following lemma gives a lower tail bound for the minimum of $\gamma(A)/|A|$ over all $A$ with $|A|\ge n/2$. The important thing is that the bound has no dependence on $\beta$.
\begin{lmm}\label{kappalmm}
There is a positive constant $\kappa$ depending only on $\nu$ and $d$ such that if we define the event
\[
E :=\{\gamma(A)\le \kappa |A| \textup{ for some $A$ with $|A|\ge n/2$}\},
\]
then $\pp(E) \le e^{-n}$.
\end{lmm}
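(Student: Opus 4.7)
The plan is to prove the lemma via a union bound over all paths in $\cp_n$, combined with a Chernoff-type tail estimate applied to independent Bernoulli indicators along each fixed path. The crucial observation is that no $\beta$-dependent information about the Gibbs measure is needed: once one establishes the deterministic lower bound $\sum_{k\in A} h(\omega_{k,p_k})\ge \kappa|A|$ simultaneously for every path $p\in\cp_n$, the bound on $\gamma(A)$ will follow by integrating out with respect to $\mu_{\beta,n}$.

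First I would use the strict positivity of $h$ on $(a,b)$ (Lemma~\ref{hlmm}) to choose a constant $\kappa>0$, depending only on $\nu$ and $d$, so small that the probability $\delta := \pp(h(W)<2\kappa)$ with $W\sim \nu$ satisfies $(2d)^4\cdot 4e\delta\le e^{-4}$; such a $\kappa$ exists because $h>0$ $\nu$-almost surely, hence $\delta\downarrow 0$ as $\kappa\downarrow 0$. For any fixed path $p=(x_1,\ldots,x_n)\in\cp_n$, the indicators $\{\mathbf{1}[h(\omega_{k,x_k})<2\kappa]\}_{k=1}^n$ are i.i.d.\ Bernoulli$(\delta)$, since the first coordinates are distinct and the environment is independent across them. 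The crude binomial tail bound $\pp(\mathrm{Bin}(n,\delta)>n/4)\le \binom{n}{\lceil n/4\rceil}\delta^{n/4}\le (4e\delta)^{n/4}$ then gives
\[
\pp\Bigl(|\{k:h(\omega_{k,x_k})<2\kappa\}|>n/4\Bigr)\le (4e\delta)^{n/4}.
\]
A union bound over the at most $(2d)^n$ nearest-neighbor paths yields
\[
\pp\Bigl(\exists\,p\in\cp_n\colon |\{k:h(\omega_{k,p_k})<2\kappa\}|>n/4\Bigr)\le \bigl((2d)^4\cdot 4e\delta\bigr)^{n/4}\le e^{-n}.
\]

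On the complementary (good) event, every path $p\in\cp_n$ has $|G_p|\ge 3n/4$, where $G_p:=\{k:h(\omega_{k,p_k})\ge 2\kappa\}$. Fixing any $A\subseteq\{1,\ldots,n\}$ with $|A|\ge n/2$ and any path $p$, we get $|A\cap G_p|\ge |A|-n/4\ge |A|/2$, and hence
\[
\sum_{k\in A} h(\omega_{k,p_k})\ge 2\kappa\cdot|A\cap G_p|\ge \kappa|A|.
\]
Since $\theta_{k,\cdot}$ is the step-$k$ marginal of $\mu_{\beta,n}$, we may rewrite
\[
\gamma(A)=\sum_{k\in A}\sum_x h(\omega_{k,x})\theta_{k,x}= \ee_{P\sim\mu_{\beta,n}}\sum_{k\in A} h(\omega_{k,P_k}),
\]
and the path-wise bound gives $\gamma(A)\ge\kappa|A|$ deterministically on the good event. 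This shows $\pp(E)\le e^{-n}$.

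The only quantitative delicacy is the choice of $\kappa$ in the first step, which must be small enough that the Chernoff decay rate strictly exceeds the combinatorial entropy $\log(2d)$ coming from the number of nearest-neighbor paths; this is the only place the dimension enters. Since the entire argument is carried out path-wise and uses only that $\mu_{\beta,n}$ is a probability measure on $\cp_n$, the resulting $\kappa$ depends only on $\nu$ and $d$, independently of $\beta$, as required.
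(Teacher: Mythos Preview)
Your proof is correct. Both your argument and the paper's rest on the same two ingredients: the strict positivity of $h$ on $(a,b)$ (Lemma~\ref{hlmm}) and a union bound over the $(2d)^n$ nearest-neighbor paths, combined with a Chernoff-type tail estimate for i.i.d.\ variables along a fixed path. The route diverges in how the quantifier over subsets $A$ is handled.

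The paper takes an additional union bound over all $A$ with $|A|\ge n/2$, and for each fixed pair $(p,A)$ applies an exponential Markov inequality directly to the sum $\psi(p,A)=\sum_{k\in A}h(\omega_{k,x_k})$, using that the Laplace transform $\phi(\lambda)=\ee e^{-\lambda h(X)}\to 0$ as $\lambda\to\infty$. This costs an extra factor $2^n$, which is then absorbed by choosing $\lambda$ large enough. You instead control, for each path, the \emph{count} of indices where $h(\omega_{k,x_k})$ falls below a threshold $2\kappa$; once that count is at most $n/4$ simultaneously for all paths, a purely deterministic pigeonhole argument handles every $A$ with $|A|\ge n/2$ at once. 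This avoids the union over $A$ entirely and yields a slightly tidier bookkeeping, at the price of the extra combinatorial step $|A\cap G_p|\ge |A|-n/4\ge |A|/2$.

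One cosmetic point: your final display concludes $\gamma(A)\ge\kappa|A|$ on the good event, whereas $E^c$ requires the strict inequality $\gamma(A)>\kappa|A|$. This is harmless --- either observe that $h>0$ on $A\setminus G_p$ forces strictness whenever $A\not\subseteq G_p$ (and $|A\cap G_p|=|A|$ gives $\gamma(A)\ge 2\kappa|A|$ otherwise), or simply replace $\kappa$ by $\kappa/2$ in the statement.
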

\begin{proof}
For $p = (x_1,\ldots, x_n)\in \cp_n$ and $A\subseteq \{1,\ldots,n\}$, define
\[
\psi(p,A) := \sum_{k\in A} h(\omega_{k,x_k}). 
\] 
Note that if $P$ is a random path drawn from the Gibbs measure $\mu_{\beta,n}$, then $\gamma(A)$ is simply the expected value of $\psi(P,A)$ under $\mu_{\beta,n}$. This shows that
\begin{align*}
\gamma(A) &\ge \min\{\psi(p,A): p\in \cp_n\}. 
\end{align*}
Thus, for any $\kappa>0$, 
\begin{align}
\pp(E) &\le \pp(\gamma(A)\le \kappa n \textup{ for some $A$ with $|A|\ge n/2$}) \nonumber\\
&\le \sum_{A: |A|\ge n/2} \pp(\gamma(A)\le \kappa n) \nonumber \\
&\le \sum_{A: |A|\ge n/2} \sum_{p\in \cp_n}\pp(\psi(p,A)\le \kappa n).\label{unionbd}
\end{align}
For each $\lambda >0$, let
\[
\phi(\lambda) := \int_a^be^{-\lambda h(x)}f(x)dx. 
\]
Since $h$ is strictly positive everywhere in $(a,b)$ (by Lemma \ref{hlmm}), it follows by the dominated convergence theorem that 
\begin{align}\label{philim}
\lim_{\lambda \to \infty} \phi(\lambda)=0. 
\end{align}
Take any $A$ with $|A|\ge n/2$ and any $p\in \cp_n$. Then for any  $\lambda>0$,
\begin{align*}
\pp(\psi(p,A) \le \kappa n) &= \pp(e^{-\lambda \psi(p,A)} \ge e^{-\lambda \kappa n})\\
&\le e^{\lambda \kappa n} \ee(e^{-\lambda \psi(p,A)})\\
&= e^{\lambda \kappa n} \phi(\lambda)^{|A|} \le  e^{\lambda \kappa n} \phi(\lambda)^{n/2}. 
\end{align*}
Therefore by \eqref{unionbd},
\begin{align*}
\pp(E) &\le 2^n (2d)^n  e^{\lambda \kappa n} \phi(\lambda)^{n/2}.
\end{align*}
By \eqref{philim} we may choose a positive number $\lambda$ so large that 
\[
\log\phi(\lambda) \le -2\log 2-2\log (2d)-4.
\]
The proof is now completed by choosing $\kappa=1/\lambda$.
\end{proof}
We are now ready to finish the proof of Theorem \ref{mainthm}.
\begin{proof}[Proof of Theorem \ref{mainthm}]
Recall the set $S_\ep$ defined in \eqref{sdef}. By Lemma \ref{alphagammalmm}, it follows that if $k\in S_\ep$, then
\[
\gamma_k' \le L_2\ep + \frac{L_2}{\beta}. 
\]
Thus, there exist positive constants $\ep_0$ and $\beta_0$ depending only on $\nu$ and $d$, such that if $\ep\le\ep_0$ and $\beta \ge \beta_0$, then for each $k\in S_\ep$,
\begin{equation}\label{kbd}
\gamma_k'\le \frac{\kappa}{2},
\end{equation}
where $\kappa$ is the constant from Lemma \ref{kappalmm}. We will henceforth assume that $\ep\le \ep_0$ and $\beta \ge \beta_0$. Let $E$ be the event from Lemma~\ref{kappalmm} and let $E^c$ denote its complement. Define another event
\[
F := \{|S_\ep|\ge n/2\}.
\]
If the event $F\cap E^c$ happens, then
\[
\gamma(S_\ep)> \kappa|S_\ep|.
\]
Moreover, by \eqref{kbd}, 
\[
\sum_{k\in S_\ep}\gamma_k'\le \frac{\kappa|S_\ep|}{2}. 
\]
Consequently, if the event $F\cap E^c$ happens, then 
\[
\biggl|\sum_{k\in S_\ep}(\gamma_k-\gamma_k')\biggr|> \frac{\kappa|S_\ep|}{2}\ge \frac{\kappa n}{4}.
\]
Thus, by Lemma \ref{gammabdlmm}, 
\begin{align*}
\pp(F\cap E^c) &\le \pp\biggl(\biggl|\sum_{k\in S_\ep}(\gamma_k-\gamma_k')\biggr|\ge \frac{\kappa n}{4}\biggr)\\
&\le \frac{4}{\kappa n} \ee\biggl|\sum_{k\in S_\ep}(\gamma_k-\gamma_k')\biggr|\\
&\le \frac{4L_3\beta\sqrt{\ep}}{\kappa}. 
\end{align*}
Therefore by Lemma \ref{kappalmm},
\begin{align}
\pp(|S_\ep|\ge n/2) &= \pp(F) \le \pp(F\cap E^c) + \pp(E)\nonumber\\
&\le C_1\beta\sqrt{\ep} + e^{-n},\label{sbd}
\end{align}
where $C_1$ depends only on $\nu$ and $d$. 
Let \[
R_\ep := \{1,\ldots,n\}\setminus S_\ep.
\]
If $k\in R_\ep$, then $\alpha_k'> \ep$, and hence by Lemma \ref{alphamainlmm}, $\alpha_k > e^{-L_1\beta} \ep$.  
Therefore, if $|S_\ep|< n/2$, then
\begin{align*}
\sum_k \alpha_k &\ge \sum_{k\in R_\ep} \alpha_k > |R_\ep| e^{-L_1\beta}\ep\\
&> \frac{1}{2} ne^{-L_1\beta}\ep.
\end{align*}
Thus, by \eqref{sbd},
\begin{align*}
\pp\biggl(\sum_k\alpha_k \le\frac{1}{2} ne^{-L_1\beta}\ep\biggr) &\le \pp(|S_\ep|\ge n/2)\\
 &\le C_1\beta\sqrt{\ep} + e^{-n}.
\end{align*}
But it is not hard to see that 
\[
\frac{1}{n}\sum_{k=1}^n \alpha_k = \rho(\mu_{\beta,n}). 
\]
Thus,
\begin{align*}
\pp\biggl(\rho(\mu_{\beta,n}) \le \frac{1}{2} e^{-L_1\beta}\ep\biggr)  &\le C_1\beta\sqrt{\ep} + e^{-n}.
\end{align*}
Writing $\delta =  \frac{1}{2} e^{-L_1\beta}\ep$, it follows from this  that there are positive constants $\delta_0$ and $C_2$, depending only on $\nu$ and $d$, such that for any $\delta \le \delta_0$,
\begin{align}\label{rho1}
\pp(\rho(\mu_{\beta,n}) \le \delta)  \le e^{C_2\beta}\sqrt{\delta} + e^{-n}.
\end{align}
By \eqref{alphalow}, we know that
\[
\rho(\mu_{\beta,n}) \ge \frac{1}{n}\sum_{k=1}^n \frac{1}{(2k+1)^d}\ge \frac{1}{3^dn}.
\]
Therefore the left side of \eqref{rho1} is zero if $\delta < 1/(3^d n)$. Consequently, if $\delta < 1/(3^d n)$, then
\[
\pp(\rho(\mu_{\beta,n}) \le \delta) = 0 \le e^{C_2\beta}\sqrt{\delta},
\]
whereas if $\delta \ge 1/(3^d n)$, then 
\begin{align*}
\pp(\rho(\mu_{\beta,n})\le \delta) &\le e^{C_2\beta}\sqrt{\delta} + e^{-n}\\
&\le  e^{C_2\beta}\sqrt{\delta} + e^{-1/(3^d \delta)}\le (e^{C_2\beta} + C_3) \sqrt{\delta},
\end{align*}
where $C_3$ depends only on $d$. 
Since $\beta \ge \beta_0>0$, and $\beta_0$ depends only on $d$ and $\nu$, it follows that $C_3 \le e^{C_4 \beta}$ for some $C_4$ depending on $d$ and $\nu$. Thus, irrespective of the value of $\delta$, $\pp(\rho(\mu_{\beta,n})\le \delta)\le e^{C_5\beta} \sqrt{\delta}$ for some $C_5$ depending only on $d$ and $\nu$. The tail bound for $\ell(\mu_{\beta,n})$ is now derived using Proposition \ref{equivprop}.
\end{proof}

\section*{Acknowledgments}
I thank Erik Bates, Francis Comets, and the anonymous referees for a number of useful comments and suggestions.

\end{document}